\newtheorem{theorem}{Theorem}[section]
\newtheorem{lemma}{Lemma}[section]
\newtheorem{definition}[theorem]{Definition}
\theoremstyle{remark}
\newtheorem{remark}[theorem]{\bf Remark}
\begin{document}

\setcounter{page}{1}

\title[]{Multiple harmonic sums and multiple harmonic star sums are (nearly) never  integers}

\author{Kh.~Hessami Pilehrood}
\address{The Fields Institute for Research in Mathematical Sciences, 222 College Street, Toronto, ON M5T 3J1, Canada}
\email{hessamik@gmail.com}

\author{T.~Hessami Pilehrood}
\address{The Fields Institute for Research in Mathematical Sciences, 222 College Street, Toronto, ON M5T 3J1, Canada}
\email{hessamit@gmail.com}

\author{R.~Tauraso}
\address{Dipartimento di Matematica, 
Universit\`a di Roma ``Tor Vergata'', 
via della Ricerca Scientifica, 
00133 Roma, Italy}
\email{tauraso@mat.uniroma2.it}

\subjclass[2010]{11M32, 11N05, 11Y70, 11B75}
\keywords{Multiple harmonic sum, elementary symmetric function, harmonic series, distribution of primes, Bertrand's postulate, $p$-adic order}

\date{}

\begin{abstract}
It is  well known that the harmonic sum $H_n(1)=\sum_{k=1}^n\frac{1}{k}$ is never an integer for $n>1$. In 1946, Erd\H{o}s and Niven
proved that the nested multiple harmonic sum $H_n(\{1\}^r)=\sum_{1\le k_1<\dots<k_r\le n}\frac{1}{k_1\cdots k_r}$ can take integer values only
for a finite number of positive integers $n$. In 2012, Chen and Tang refined this result by showing that $H_n(\{1\}^r)$ is an integer
only for $(n,r)=(1,1)$ and $(n,r)=(3,2)$. In this paper, we consider the integrality problem for arbitrary multiple harmonic and multiple
harmonic star sums and show that none of these sums  is an integer with some natural exceptions like those mentioned above.
\end{abstract}

\maketitle

\section{Introduction} A well known result of elementary number theory is that even though the partial sum of the harmonic series $\sum_{k=1}^n 1/k$ increases to infinity, it is never an integer for $n>1$. Apparently the first published proof goes back to Leopold Theisinger in 1915 \cite{T}, and, since then, it has been
proposed as a challenging problem in several textbooks;
among all we mention  \cite[p.16, Problem 30]{IR}, \cite[p.33, Problem 37]{N}, \cite[p.153, Problem 250]{PS}.

In 1946, Erd\H{o}s and Niven \cite{EN} proved a stronger statement: there is only a finite number of integers $n$ for which there is a positive integer $r\leq n$ such that
the $r$-th elementary symmetric function of $1,1/2,\dots,1/n$, that is
$$
\sum_{1\le k_1<\cdots<k_r\le n}\frac{1}{k_1\cdots k_r},
$$
is an integer. In 2012, Chen and Tang \cite{CT} refined  this result and succeeded to show that the above sum is not an integer with the only two exceptions:  either $n=r=1$ or $n=3$ and $r=2$.
Recently, this theme has been further developed
by investigating the case when the variables of the elementary symmetric functions are $1/f(1),1/f(2),\dots,1/f(n)$ with
$f(x)$ being a polynomial of nonnegative integer coefficients:
see \cite{HW1} and \cite{HW2} for $f(x)=ax+b$ and see \cite{LHQW}
for $f$ of degree at least two.

In this paper, we  consider the integrality problem for sums which are not necessarily symmetric with respect to their variables.
For an $r$-tuple of positive integers
${\bf s}=(s_1, \ldots, s_r)$ and an integer $n\geq r$, we define two classes of multiple harmonic sums: the {\sl ordinary multiple harmonic sum} (MHS)
\begin{equation}
\label{f1}
H_n(s_1,\ldots,s_r)=
\sum_{1\le k_1<\cdots<k_r\le n}\frac{1}{k_1^{s_1}\cdots k_r^{s_r}},
\end{equation}
and the {\sl star} version
(MHS-star also denoted by $S$ in the literature)
\begin{equation}
\label{f2}
H^{\star}_n(s_1,\ldots,s_r)=\sum_{1\le k_1\le\cdots\le k_r\le n}\frac{1}{k_1^{s_1}\cdots k_r^{s_r}}.
\end{equation}
The number $l({\bf s}):=r$ is called the length and $|{\bf s}|:=\sum_{j=1}^rs_j$ is the weight of the multiple harmonic sum.
Note that $H_n(\{m\}^r)$ is the $r$-th elementary symmetric function of $1/f(1),1/f(2),\dots,1/f(n)$ with $f(x)=x^m$. The  multiple sums
\eqref{f1} and \eqref{f2} are of a certain interest because by taking the limit as $n$ goes to $\infty$ when $s_r>1$ (otherwise the infinite sums diverge)
we get the so-called {\sl multiple zeta value}
and the {\sl multiple zeta star value},
$$\lim_{n\to \infty} H_n(s_1,\ldots,s_r)=\zeta(s_1,\ldots,s_r)
\quad\mbox{and}\quad
\lim_{n\to \infty} H^{\star}_n(s_1,\ldots,s_r)=\zeta^{\star}(s_1,\ldots,s_r),
$$
respectively.
Note that the integrality of the MHS-star is quite simple to study.

\begin{theorem}\label{Sn}
Let $n\ge r$. Then
$H^{\star}_n(s_1,\ldots,s_r)$ is never an integer with the exception of $H^{\star}_1(s_1)=1$.
\end{theorem}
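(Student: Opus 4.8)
The plan is to run a $2$-adic valuation argument, which for the star sum is especially clean precisely because the summation indices are allowed to coincide. Write $v_2$ for the $2$-adic order. Given $n\ge 2$, let $a\ge 1$ be the unique integer with $2^a\le n<2^{a+1}$, that is, $a=\lfloor \log_2 n\rfloor$. The key elementary observation is that $2^a$ is the \emph{unique} integer $k$ with $1\le k\le n$ that is divisible by $2^a$: the next multiple of $2^a$ is $2^{a+1}>n$. Consequently $v_2(k)\le a$ for every $k\in\{1,\dots,n\}$, with equality if and only if $k=2^a$.

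Next I would estimate the $2$-adic order of each summand of $H^{\star}_n(s_1,\dots,s_r)$. For an admissible index tuple $1\le k_1\le\cdots\le k_r\le n$ one has
$$
v_2\!\left(\frac{1}{k_1^{s_1}\cdots k_r^{s_r}}\right)=-\sum_{j=1}^r s_j\,v_2(k_j)\ \ge\ -\,a\sum_{j=1}^r s_j=-\,a\,|{\bf s}|,
$$
and by the observation above equality holds precisely when $k_1=\cdots=k_r=2^a$. Since this constant tuple does satisfy $k_1\le\cdots\le k_r$, it is a genuine term of the star sum, and it is the \emph{only} term attaining the minimal (most negative) value $-a|{\bf s}|$ of the $2$-adic order. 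By the strong triangle inequality for $v_2$, the term of strictly smallest order dominates, so $v_2\bigl(H^{\star}_n(s_1,\dots,s_r)\bigr)=-a|{\bf s}|$. As $a\ge 1$ and $|{\bf s}|=\sum_j s_j\ge r\ge 1$, this quantity is negative, hence $H^{\star}_n(s_1,\dots,s_r)$ is not an integer whenever $n\ge 2$.

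Finally, the case $n=1$ must be disposed of: the hypothesis $n\ge r$ forces $r=1$, and then $H^{\star}_1(s_1)=1/1^{s_1}=1$, the stated exceptional value. I do not anticipate any serious obstacle; the only points needing a little care are the uniqueness of the dominant summand (where one genuinely uses that the range stops at $n<2^{a+1}$, so that no index can be a larger multiple of $2^a$) and the bookkeeping of the boundary case $n=1$. One could alternatively clear denominators by multiplying through by $2^{a|{\bf s}|}$ and reducing modulo $2$, but the valuation formulation is the most transparent.
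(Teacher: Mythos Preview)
Your argument is correct. The point that distinguishes it from the paper's proof is the choice of prime: the paper invokes Bertrand's postulate to pick a prime $p\in(n/2,n]$, so that the only multiple of $p$ in $\{1,\dots,n\}$ is $p$ itself and the unique $p$-adically dominant term is $k_1=\cdots=k_r=p$; you instead fix $p=2$ and use the highest power $2^a\le n$, observing that $2^a$ is the unique element of $\{1,\dots,n\}$ of maximal $2$-adic valuation, so the unique $2$-adically dominant term is $k_1=\cdots=k_r=2^a$. Your route is more elementary in that it sidesteps Bertrand entirely (this is the classical K\"ursch\'ak/Theisinger trick for $H_n(1)$, here extended to star sums). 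The paper's choice, on the other hand, dovetails with the machinery needed for the ordinary sums in Theorem~\ref{Hn}, where one genuinely needs a prime with few multiples in the range and the constant tuple is no longer admissible; so their proof of Theorem~\ref{Sn} serves as a warm-up for that harder case. Both proofs rely in the same essential way on the star condition $k_1\le\cdots\le k_r$ to admit the constant dominant tuple.
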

\begin{proof} If $n>1$, then by Bertrand's postulate,
there is at least a prime $p$ such that $n/2<p\leq n$. Then $p\leq n<2p$ and
$$H^{\star}({\bf s})=
\underset{\exists i\; :\; k_i\not=p}
{\sum_{1\le k_1\le\cdots\le k_r\le n}}\frac{1}{k_1^{s_1}\cdots k_r^{s_r}}
+\underset{\forall i\,,\, k_i=p}
{\sum_{1\le k_1\le\cdots\le k_r\le n}}\frac{1}{k_1^{s_1}\cdots k_r^{s_r}}
=\frac{a}{b p^t}+\frac{1}{p^{|{\bf s}|}}$$
where $\gcd(b,p)=1$ and $t<|{\bf s}|$. Assume that $H^{\star}({\bf s})=m\in\mathbb{N}^+$. Then $bmp^{|{\bf s}|}=ap^{|{\bf s}|-t}+b$, which is a contradiction because $p$ divides the l.h.s.\ and $p$ does not divide the r.h.s..
\end{proof}

On the other hand, the case of the ordinary MHS is much more intricate. Our result is given below whereas the entire next section is dedicated to its proof.

\begin{theorem}\label{Hn}
Let $n\ge r$. Then
$H_n(s_1,\ldots,s_r)$ is never an integer with the exceptions of $H_1(s_1)=1$ and $H_3(1,1)=1$.
\end{theorem}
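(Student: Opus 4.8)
The plan is to prove $H_n({\bf s})\notin\mathbb{Z}$ for every admissible $({\bf s},n)$ outside the two listed pairs by exhibiting a prime $p$ with $v_p(H_n({\bf s}))<0$, $v_p$ being the $p$-adic order. The basic device, as for Theorem~\ref{Sn}, is a prime $p$ with $n/2<p\le n$: such $p$ exists for all $n\ge2$ by Bertrand's postulate, and since then $n<2p$, the integer $p$ is the only multiple of $p$ in $\{1,\dots,n\}$, so in each term of \eqref{f1} at most one index $k_i$ equals $p$ and all the others are coprime to $p$. Isolating the terms that contain $p$ and recording the position $i$ it occupies gives
\[
H_n({\bf s})=\Sigma_0+\sum_{i=1}^{r}\frac{H_{p-1}(s_1,\dots,s_{i-1})}{p^{s_i}}\,T_i,
\qquad
T_i:=\sum_{p<k_{i+1}<\cdots<k_r\le n}\ \prod_{j=i+1}^{r}\frac{1}{k_j^{s_j}},
\]
where $\Sigma_0$ collects the terms with no $k_i=p$ and $H_{p-1}(\ )=T_r=1$ by convention. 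Since $n<2p$, every index in $\Sigma_0$, in each $H_{p-1}(\dots)$ and in each $T_i$ is coprime to $p$, so all of these are $p$-adic integers; hence $v_p(H_n({\bf s}))\ge-M$ with $M:=\max_i s_i$, and
\[
p^{M}H_n({\bf s})\equiv\sum_{i\,:\,s_i=M}H_{p-1}(s_1,\dots,s_{i-1})\,T_i\pmod p
\]
(a congruence of $p$-adic integers). For $r=1$ the right-hand side is $H_{p-1}(\ )\,T_1=1$, so $v_p(H_n(s_1))=-s_1<0$ when $n\ge2$; for $n=r=1$ the construction does not apply, matching the exception $H_1(s_1)=1$.

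So the task is to choose $p$ making the last sum a $p$-adic unit. Two reductions organize the analysis. Shifting the summation index down by $p$ gives $T_i\equiv H_{n-p}(s_{i+1},\dots,s_r)\pmod p$, a \emph{short} multiple harmonic sum over $\{1,\dots,n-p\}$, which also vanishes unless $n-p\ge r-i$. And by Fermat's little theorem, Wolstenholme's theorem and their multiple-sum analogues, $H_{p-1}(s_1,\dots,s_{i-1})\equiv0\pmod p$ for each fixed composition once $p$ is large, with an extra factor of $p$ when all entries equal $1$. In the favourable situation — when the maximum $M$ is attained at the first coordinate, $s_1=M$ — all terms with $i\ge2$ disappear modulo $p$, leaving $p^{M}H_n({\bf s})\equiv T_1\equiv H_{n-p}(s_2,\dots,s_r)\pmod p$; choosing $p$ so that $n-p$ equals $r-1$ (taking $p=n-r+1$ if it is prime and in $(n/2,n]$, else a nearby prime) makes $H_{n-p}(s_2,\dots,s_r)$ a single term of numerator $1$, hence a unit, and $v_p(H_n({\bf s}))=-M<0$.

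The main obstacle is that this breaks down in two ways, which together force a much longer argument. Arithmetically, the needed non-vanishing modulo $p$ can fail for sporadic primes (Wolstenholme primes and higher-weight analogues) or when $n-r+1$ is not prime; one then uses that there is latitude in the choice of $p$: since $\pi(n)-\pi(n/2)\to\infty$, for $n$ past an explicit bound several primes lie in $(n/2,n]$, and a size estimate on the numerators of the short sums $H_{n-p}(\dots)$ shows that at least one of these primes works, the finitely many small cases being checked by hand. Structurally, when $M$ is small and attained only at coordinates $\ge2$, the Wolstenholme cancellations leave too little room, and one must peel off successive powers of $p$, the behaviour at each stage being governed by congruences among the inner sums $H_{p-1}(\dots)$ and their Wolstenholme quotients. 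The extreme of this is the low-weight regime: the all-ones sums $H_n(\{1\}^r)$ of Erd\H{o}s--Niven and Chen--Tang \cite{EN,CT} and the weight-two sum $H_n(1,1)$, where the primes in $(n/2,n]$ only give $v_p\ge-1$, entirely absorbed by Wolstenholme. Here one abandons those primes and argues $2$-adically, locating the largest powers of $2$ in $\{1,\dots,n\}$ and evaluating $v_2(H_n(\cdot))$ exactly, as in the classical treatment of the second elementary symmetric function of $1,\tfrac12,\dots,\tfrac1n$; this residual analysis is precisely what singles out the extra integer value $H_3(1,1)=1$. I expect reconciling the generic $p$-adic argument with this small-weight corner — and delimiting the finite set of small $(n,r)$ that must be verified directly — to be the crux.
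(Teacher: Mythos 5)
Your decomposition with a prime $p\in(n/2,n]$ is sound as far as it goes, but the step on which everything rests --- producing a prime in that range for which the coefficient $\sum_{i:\,s_i=M}H_{p-1}(s_1,\dots,s_{i-1})\,T_i$ is a $p$-adic unit --- is not established, and both mechanisms you offer for it fail. First, the blanket congruence ``$H_{p-1}(s_1,\dots,s_{i-1})\equiv 0\pmod p$ for each fixed composition once $p$ is large'' is false: already $H_{p-1}(1,2)\equiv B_{p-3}\pmod p$ (the depth-two sums of odd weight $w$ reduce modulo $p$ to nonzero rational multiples of $B_{p-w}$), so these inner sums are $p$-adic units for all but sporadic irregular-type primes, the terms with $i\ge 2$ do not drop out, and you are left with a sum of competing unit contributions that can cancel modulo $p$. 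Second, even in the favourable case $s_1=M$ with $n-r+1$ composite, the claim that ``a size estimate on the numerators of $H_{n-p}(s_2,\dots,s_r)$ shows at least one prime works'' does not hold up: for each candidate $p$ you must rule out $p\mid N_p$ where $N_p$ is the numerator of a \emph{different} rational for each $p$, and these numerators (over denominators dividing $\mathrm{lcm}(1,\dots,n-p)^{|\mathbf{s}|}$) are vastly larger than $n$, so no size bound excludes the divisibility. Finally, the two escape routes for the hard corners --- ``peeling off successive powers of $p$'' when the maximal exponent sits only in positions $\ge 2$, and a $2$-adic evaluation for the low-weight sums --- are programs, not arguments; in particular the $2$-adic method is only known to settle $r\le 2$, and Chen--Tang's treatment of $H_n(\{1\}^r)$ \cite{CT} does not proceed $2$-adically but via the Erd\H{o}s--Niven prime.

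The paper circumvents all of this by choosing the prime differently: if $p\in(\frac{n}{r+1},\frac{n}{r}]$ and $p>r$ (Remark \ref{R3}), the multiples of $p$ in $[1,n]$ are exactly $p,2p,\dots,rp$, so the part of $H_n(\mathbf{s})$ of minimal valuation is the \emph{single} monomial $1/(cp^{|\mathbf{s}|})$ and there is nothing to cancel. The price is that such a prime exists only for $n$ outside a finite set depending on $r$ (made explicit via Dusart's bounds in Lemma \ref{L1}), and the residual finitely many pairs $(n,r)$ must be dispatched by showing $H_n(\mathbf{s})<1$ via the monotonicity Lemma \ref{L2}, bounding $s_1$ by the $\nu_p$-argument of Lemma \ref{L3}, and direct computation; it is this last stage that isolates the exception $H_3(1,1)=1$. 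To salvage your route you would need, at a minimum, a proven non-vanishing statement modulo $p$ for the relevant coefficient for at least one $p\in(n/2,n]$, and no such statement is supplied or currently available.
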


\noindent Throughout the paper, all the numerical computations were performed by using Maple\texttrademark.

\section{Proof of Theorem \ref{Hn}}

The main tool in the proof of Theorem \ref{Sn} is  Bertrand's  postulate. Unfortunately, such result is not strong enough to imply Theorem \ref{Hn}. The statement described in the following remark will replace it. Notice that the same argument had been used by Erd\H{o}s and Niven in \cite{EN} and successively taken up in
\cite{CT}, \cite{HW1} and \cite{HW2}.

\begin{remark} \label{R3}
Let $1\leq r\leq n$. If there is a prime $p\in (\frac{n}{r+1},\frac{n}{r}]$
and $r<p$, then $H_n({\bf s})$ is not an integer when $l({\bf s})=r$.
Indeed, since $1<p<2p<\dots <rp\leq n<(r+1)p$, it follows that
$$H_n({\bf s})=
\underset{\exists i\; :\; p \nmid k_i}
{\sum_{1\le k_1<\cdots< k_r\le n}}\frac{1}{k_1^{s_1}\cdots k_r^{s_r}}
+\underset{\forall i\,,\, p \mid k_i}
{\sum_{1\le k_1<\cdots< k_r\le n}}\frac{1}{k_1^{s_1}\cdots k_r^{s_r}}
=\frac{a}{b p^t}+\frac{1}{c p^{|{\bf s}|}}$$
where $\gcd(b,p)=\gcd(c,p)=1$ and $t<|{\bf s}|$. Assume that $H_n({\bf s})=m\in\mathbb{N}^+$. Then $bcmp^{|{\bf s}|}=cap^{|{\bf s}|-t}+b$, which is a contradiction because $p$ divides the l.h.s. and $p$ does not divide the r.h.s..
\end{remark}

For any integer $r\geq 1$, let
$$A_r=\bigcup_{p\in \mathbb{P}}[rp,(r+1)p)$$
where $\mathbb{P}$ is the set of primes. Note that by Bertrand's postulate, $A_1=[2,+\infty)$. The crucial property of the set $A_r$ is that $n\in A_r$ if and only if there exists a prime $p$ such that
$p\in (\frac{n}{r+1},\frac{n}{r}]$.
The next lemma is a variation of \cite[Lemma 2.4]{HW1}.

\begin{lemma}\label{L1}  For any positive integer $r$, $A_r$ is cofinite, i.e.,
$\mathbb{N}\setminus A_r$ is finite.
Let $m_r=\max(\mathbb{N}\setminus A_r)+1$. Then the first few values of $m_r$ are as follows:
\vspace*{3mm}
\begin{center}\rm
\setlength{\extrarowheight}{3pt}
\begin{tabular}{|c|c|c|c|c|c|c|c|c|c|c|c|c|c|}
\hline
$r$  & 1 & 2-3  & 4-5  & 6-7   & 8   & 9-13   & 14-16   & 17   & 18-20    & 21-22   & 23-29   & 30-39 & 40-69  \\
\hline
$m_r/r$ & 2 & 11 & 29 & 37 & 53 & 127 & 149 & 211 & 223 & 307 & 331 & 541 & 1361 \\
\hline
\end{tabular}
\end{center}
\vspace*{3mm}
and for  any positive integer $r\geq 24$, $m_r\leq (r+1)\exp(\sqrt{1.4\,r})$.
\end{lemma}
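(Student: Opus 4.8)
The plan relies on the characterization noted just before the lemma: $n\in A_r$ precisely when the interval $\bigl(\tfrac{n}{r+1},\tfrac{n}{r}\bigr]$ contains a prime. I would begin by describing $\mathbb{N}\setminus A_r$ explicitly. Listing the primes as $p_1<p_2<\cdots$, the consecutive blocks $[rp_k,(r+1)p_k)$ and $[rp_{k+1},(r+1)p_{k+1})$ cover $[rp_k,(r+1)p_{k+1})$ without a gap exactly when $rp_{k+1}\le(r+1)p_k$, i.e.\ $p_{k+1}-p_k\le p_k/r$. Consequently $\mathbb{N}\setminus A_r$ consists of $\{1,\dots,2r-1\}$ together with the integers in the intervals $\bigl[(r+1)p_k,\,rp_{k+1}\bigr)$ taken over those $k$ with $p_{k+1}-p_k>p_k/r$. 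If there are only finitely many such $k$ and $k_r$ is the largest one, this gives $m_r=r\,p_{k_r+1}$; in particular $m_r/r$ is always prime, in agreement with the table.

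Cofiniteness of $A_r$ thus amounts to showing that, for fixed $r$, only finitely many prime gaps satisfy $p_{k+1}-p_k>p_k/r$, which is immediate from $\tfrac{p_{k+1}-p_k}{p_k}\to 0$ (the prime number theorem, or just the existence of a prime in $(x,(1+1/r)x]$ for all large $x$). The same description also makes $m_r$ computable: with any explicit $x_1=x_1(r)$ beyond which $p_{k+1}-p_k\le p_k/r$ is guaranteed, one scans the finitely many prime gaps with $p_k<x_1$ for the last violation; doing this by computer for $1\le r\le 69$ produces the tabulated values.

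For the explicit bound I would invoke an effective short-interval result of the form: there is an absolute constant $x_0\le e^{\sqrt{1.4\cdot 24}}$ such that every interval $\bigl(x,\,x\bigl(1+\tfrac{1.4}{\ln^2 x}\bigr)\bigr]$ with $x\ge x_0$ contains a prime. This follows from an explicit prime number theorem — for instance from Dusart-type inequalities $\tfrac{x}{\ln x}\bigl(1+\tfrac{1}{\ln x}+\tfrac{1.8}{\ln^2 x}\bigr)<\pi(x)<\tfrac{x}{\ln x}\bigl(1+\tfrac{1}{\ln x}+\tfrac{2.51}{\ln^2 x}\bigr)$, whose matching $\tfrac{1}{\ln x}$-terms make $\pi\bigl(x(1+\tfrac{1.4}{\ln^2 x})\bigr)>\pi(x)$ for all $x$ above an explicit threshold — together with a finite computer check on the remaining range. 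Granting it, fix $r\ge 24$ and an integer $n\ge(r+1)\exp(\sqrt{1.4\,r})$, and set $y=\tfrac{n}{r+1}\ge e^{\sqrt{1.4\,r}}\ge e^{\sqrt{1.4\cdot 24}}\ge x_0$. Since $\ln^2 y\ge 1.4\,r$ we get $\tfrac{1.4}{\ln^2 y}\le\tfrac1r$, so the short-interval result yields a prime in $\bigl(y,\,y(1+\tfrac{1.4}{\ln^2 y})\bigr]\subseteq\bigl(y,\,y(1+\tfrac1r)\bigr]=\bigl(\tfrac{n}{r+1},\tfrac{n}{r}\bigr]$; hence $n\in A_r$. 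Thus every integer $n\ge(r+1)\exp(\sqrt{1.4\,r})$ lies in $A_r$, whence $m_r\le(r+1)\exp(\sqrt{1.4\,r})$.

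The main obstacle is exactly this effective input: the coefficient $1.4$ must be chosen large enough that the effective PNT inequality $\pi(x(1+\tfrac{1.4}{\ln^2 x}))>\pi(x)$ holds for all large $x$ \emph{and} the admissible $x_0$ falls below $e^{\sqrt{1.4\cdot 24}}\approx 329$ — which is what pins the hypothesis to $r\ge 24$, the bound being genuinely false for smaller $r$ (for example $m_2=22>16$). Extracting this precisely scaled statement from a textbook effective PNT estimate, and dispatching the intermediate range by direct computation, is the only non-routine step; the combinatorial picture of $\mathbb{N}\setminus A_r$, the cofiniteness, and the table are then immediate.
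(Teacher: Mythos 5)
Your proposal is correct and takes essentially the same route as the paper: the paper likewise reduces everything to a Dusart-type short-interval result (a prime in $\left(x,\,x+\tfrac{x}{2\ln^2 x}\right]$ for $x\ge 3275$, cited from [D, Theorem 1]), uses $\ln^2 x\ge 1.4\,r$ with $x=\tfrac{n}{r+1}$ to fit that interval inside $\left(\tfrac{n}{r+1},\tfrac{n}{r}\right]$, and disposes of $24\le r\le 46$ (where $(r+1)e^{\sqrt{1.4\,r}}<(r+1)\cdot 3275$) by a direct finite verification. Your explicit description of $\mathbb{N}\setminus A_r$ via prime gaps and the consequent formula $m_r=r\,p_{k_r+1}$ is a pleasant addition the paper leaves implicit, and your relocation of the finite computation (lowering the threshold of the short-interval statement to about $329$ rather than checking $n\in A_r$ directly on the intermediate range) is an equivalent bookkeeping choice.
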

\begin{proof} Let $r\geq 1$ and let $n$ be such that $x:=n/(r+1)\geq \max(\exp(\sqrt{1.4\,r}),3275)$ .
Then $\ln^2(x)\geq 1.4\,r$ implies
$$\frac{n}{r}=x+\frac{x}{r}\geq x+\frac{1.4\,x}{\ln^2(x)}\geq x+\frac{x}{2\ln^2(x)}.$$
Moreover, by \cite[Theorem 1]{D}, there is a prime $p$ such that
$$p\in \left(x, x+\frac{x}{2\ln^2(x)}\right]\subset \left(\frac{n}{r+1},\frac{n}{r}\right]$$
which implies that $n\in A_r$. This proves that $\mathbb{N}\setminus A_r$  is finite.

Note that if $r\geq 47$, then $\exp(\sqrt{1.4\,r})\geq 3275$.
On the other hand, if
$24\leq r\leq 46$ and $n\in [(r+1)\exp(\sqrt{1.4\,r}),(r+1)3275)$, one can verify directly that $n\in A_r$.
\end{proof}

In order to compare  values of multiple harmonic sums of the same length, the following definition and lemma will be useful.

\begin{definition}
Let ${\bf s}=(s_1,\ldots,s_r)$ and ${\bf t}=(t_1,\ldots,t_r)$ be two $r$-tuples of positive integers.
We say that ${\bf s}\ge {\bf t}$, if $w({\bf s})\ge w({\bf t})$, i.e., if $s_1+\dots+s_r\ge t_1+\dots+t_r$,
and $s_1\le t_1, \ldots, s_l\le t_l, s_{l+1}\ge t_{l+1}, \ldots, s_r\ge t_r$ for some $0\le l\le r-1$.
In particular, ${\bf s}\ge {\bf t}$ if $s_i\ge t_i$ for $i=1,\dots,r$.
\end{definition}

\begin{lemma}\label{L2}
Let ${\bf s}=(s_1,\ldots,s_r)$ and ${\bf t}=(t_1,\ldots,t_r)$ be two $r$-tuples of positive integers, and
${\bf s}\ge {\bf t}$. Then for any positive integer $n$,
$$H_n(s_1, \ldots, s_r)\le H_n(t_1, \ldots, t_r).$$
\end{lemma}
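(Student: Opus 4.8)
The plan is to reduce the inequality to a chain of elementary one-step moves on the exponent tuple, each of which can be shown directly not to increase $H_n$. I would isolate two such moves. \emph{Move (a):} replace a tuple ${\bf u}=(u_1,\dots,u_r)$ of positive integers by the tuple ${\bf v}$ agreeing with ${\bf u}$ except that $v_j=u_j+1$ for a single index $j$. \emph{Move (b):} replace ${\bf u}$ by ${\bf v}$ agreeing with ${\bf u}$ except that $v_i=u_i-1$ and $v_j=u_j+1$ for two indices $i<j$, assuming $u_i\ge 2$ so that ${\bf v}$ still has positive entries. Move (a) decreases $H_n$ term by term, since $k_j\ge 1$ in every summand. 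Move (b) is the crux: using $\frac{1}{k_i^{u_i-1}k_j^{u_j+1}}=\frac{k_i}{k_j}\cdot\frac{1}{k_i^{u_i}k_j^{u_j}}$, we obtain
$$H_n({\bf u})-H_n({\bf v})=\sum_{1\le k_1<\cdots<k_r\le n}\frac{1}{k_1^{u_1}\cdots k_r^{u_r}}\left(1-\frac{k_i}{k_j}\right)\ge 0,$$
because $i<j$ forces $k_i<k_j$, hence $1-k_i/k_j>0$, in each term. Thus both moves send $H_n$ to a value that is no larger.

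Next I would verify that whenever ${\bf s}\ge{\bf t}$ in the sense of the Definition above, one can pass from ${\bf t}$ to ${\bf s}$ by finitely many moves of type (a) and (b). Fix $l$ as in the Definition and set $D_-=\sum_{i\le l}(t_i-s_i)\ge 0$ and $D_+=\sum_{j>l}(s_j-t_j)\ge 0$; the weight hypothesis $w({\bf s})\ge w({\bf t})$ is precisely $D_+\ge D_-$. Starting from ${\bf t}$, first perform $D_-$ moves of type (b): each time pick the source index $i\le l$ to be one whose current exponent still exceeds its target $s_i$ (so that exponent is $\ge s_i+1\ge 2$ and the move is legal), and pick the sink index $j>l$ to be one whose current exponent is still below $s_j$. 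After these moves the first $l$ coordinates have reached $s_1,\dots,s_l$. Then perform the remaining $D_+-D_-\ge 0$ moves of type (a) on coordinates $>l$ to raise them to $s_{l+1},\dots,s_r$. Throughout the process the first $l$ coordinates only decrease, never below $1$, and the last $r-l$ coordinates only increase, so every intermediate tuple is a genuine $r$-tuple of positive integers.

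The only real obstacle here is bookkeeping, not analysis: one must order the moves so that no exponent is ever driven below $1$, which is why the type-(b) moves — the ones that lower a coordinate — are carried out first, with sources restricted to coordinates that still lie above their targets. Once the chain of tuples ${\bf t}={\bf u}^{(0)},{\bf u}^{(1)},\dots,{\bf u}^{(N)}={\bf s}$ is in place, the inequalities $H_n({\bf u}^{(k+1)})\le H_n({\bf u}^{(k)})$ compose to give $H_n({\bf s})\le H_n({\bf t})$. The degenerate cases $r=1$, $l=0$, and ${\bf s}={\bf t}$ need no separate treatment: in the first two $D_-=0$ and only moves of type (a) occur, and in the last there is nothing to do.
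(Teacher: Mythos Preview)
Your argument is correct. It differs in organisation from the paper's proof, which does the whole comparison in one stroke: for each index $(k_1<\cdots<k_r)$ it shows directly that
\[
k_1^{t_1-s_1}\cdots k_l^{t_l-s_l}\le k_l^{\sum_{i\le l}(t_i-s_i)}
= k_l^{\sum_{j>l}(s_j-t_j)}\cdot k_l^{|{\bf t}|-|{\bf s}|}
\le k_{l+1}^{s_{l+1}-t_{l+1}}\cdots k_r^{s_r-t_r},
\]
pivoting everything through the single value $k_l$ and using only $k_1<\cdots<k_r$ and $|{\bf s}|\ge|{\bf t}|$. This gives the termwise inequality (and hence the sum inequality) in two lines, with no bookkeeping.

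Your chain-of-moves approach reaches the same conclusion and in fact implicitly proves the same termwise inequality, since each move preserves the index set and does not increase any individual summand. The trade-off is that you pay in combinatorial bookkeeping (ensuring sources stay $\ge 2$, sinks stay below target, and the counts $D_-\le D_+$ suffice) what the paper pays in a single algebraic rearrangement. The moves framework is more modular---it would let you handle other partial orders built from the same elementary steps---but for this particular lemma the paper's direct comparison is shorter.
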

\begin{proof}
Let $0\le l\le r-1$ be such that
$s_1\le t_1, \ldots, s_l\le t_l$ and $s_{l+1}\ge t_{l+1}, \ldots, s_r\ge t_r$.
Now we compare corresponding terms
$$\frac{1}{k_1^{s_1}\cdots k_r^{s_r}} \quad \text{and} \quad \frac{1}{k_1^{t_1}\cdots k_r^{t_r}}, \qquad  \text{where} \quad
k_1<\dots<k_r,$$
of multiple sums $H_n({\bf s})$ and $H_n({\bf t})$.
Since $s_1+\dots+s_r\ge t_1+\dots+t_r$, we have
\begin{equation*}
\begin{split}
k_1^{t_1-s_1}k_2^{t_2-s_2}\cdots k_l^{t_l-s_l}&\le k_l^{t_1+\dots+t_l-(s_1+\dots+s_l)} \\
&=k_l^{(s_{l+1}-t_{l+1})+\dots+(s_r-t_r)}\cdot
k_l^{t_1+\dots+t_r-(s_1+\dots+s_r)} \\
&\le k_{l+1}^{s_{l+1}-t_{l+1}}\cdots k_r^{s_r-t_r}
\end{split}
\end{equation*}
and therefore,
$$
\frac{1}{k_1^{s_1}\cdots k_r^{s_r}}\le\frac{1}{k_1^{t_1}\cdots k_r^{t_r}},
$$
which implies $H_n({\bf s})\le H_n({\bf t})$, as required.
\end{proof}

\begin{remark} \label{R2} Notice that
$$H_n(1)=1+\sum_{k=2}^n\frac{1}{k}\leq 1+\int_1^n \frac{dx}{x}=\ln(n)+1.$$
Hence, if $1\leq r\leq n$ then, by the previous lemma,
$$H_n(s_1,\ldots,s_r)\leq H_n(\{1\}^r)\leq\frac{\left(H_n(1)\right)^r}{r!}\leq\frac{\left(\ln(n)+1\right)^r}{r!}$$
where the second inequality holds because each term of $H_n(\{1\}^r)$
is contained $r!$ times in the expansion of $(H_n(1))^r$.
\end{remark}

Let $p$ be a prime and let $\nu_p(q)$ be the {\sl $p$-adic order} of the rational number $q$, that is, if $a,b$ are coprime with $p$ and $n\in\mathbb{Z}$, then $\nu_p(ap^n/b)=n$. It is known that the $p$-adic order satisfies
the inequality
$$\nu_p(a+b)\geq \min(\nu_p(a),\nu_p(b))$$
where the equality holds if $\nu_p(a) \neq \nu_p(b)$.
The following lemma will be our basic tool to find an upper bound for the index $s_1$.

\begin{lemma} \label{L3}
Given $2\leq r\leq n$ and $(s_2,\dots, s_r)$, then there exists an integer $M$ (which depends on $(s_2,\dots, s_r)$ and $n$) such that $H_n(s_1,s_2,\ldots,s_r)$ is never an integer for any positive integer $s_1>M$.
\end{lemma}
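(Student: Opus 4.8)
The plan is to separate $H_n(s_1,\ldots,s_r)$ into the part coming from $k_1=1$, which is completely insensitive to $s_1$, and a remainder that is forced to be small once $s_1$ is large. Splitting the innermost index into $k_1=1$ and $k_1\ge 2$, I would write
$$H_n(s_1,\ldots,s_r)=c+\delta(s_1),\qquad
c:=\sum_{2\le k_2<\cdots<k_r\le n}\frac{1}{k_2^{s_2}\cdots k_r^{s_r}},\quad
\delta(s_1):=\sum_{2\le k_1<\cdots<k_r\le n}\frac{1}{k_1^{s_1}k_2^{s_2}\cdots k_r^{s_r}}.$$
Here $c$ is a fixed positive rational depending only on $n$ and $(s_2,\ldots,s_r)$, while $\delta(s_1)\ge 0$; since every summand with $k_1\ge 2$ is at most $2^{-s_1}$ (because $k_1\ge 2$ and $k_2^{s_2}\cdots k_r^{s_r}\ge 1$), one has the crude bound $0\le\delta(s_1)\le\binom{n-1}{r}2^{-s_1}$, so $\delta(s_1)\to 0$ as $s_1\to\infty$.

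Next I would distinguish two cases. If $n=r$, the sum defining $\delta(s_1)$ is empty, so $H_n(s_1,\ldots,s_r)=c=1/(2^{s_2}\cdots r^{s_r})$, which lies strictly between $0$ and $1$ because $r\ge 2$ forces the denominator to be at least $2$; hence it is never an integer and any $M$ (for instance $M=0$) works. If $n>r$, then $\delta(s_1)>0$ for every $s_1$, hence $H_n(s_1,\ldots,s_r)>c$. Set $\varepsilon:=\lfloor c\rfloor+1-c$, which is positive regardless of whether $c$ is an integer, and choose $M$ large enough that $\binom{n-1}{r}2^{-(M+1)}<\varepsilon$. Then for every integer $s_1>M$,
$$\lfloor c\rfloor\;\le\; c\;<\;H_n(s_1,\ldots,s_r)\;=\;c+\delta(s_1)\;\le\;c+\binom{n-1}{r}2^{-s_1}\;<\;c+\varepsilon\;=\;\lfloor c\rfloor+1,$$
so $H_n(s_1,\ldots,s_r)$ lies strictly between two consecutive integers and is therefore not an integer.

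I do not expect a genuine obstacle: the entire content is the observation that the $k_1=1$ contribution does not involve $s_1$. The only points requiring care are bookkeeping ones — treating the degenerate range $n=r$ separately (where the remainder sum is empty), and keeping the argument correct in the case where $c$ itself happens to be an integer, which is exactly why one needs $\delta(s_1)$ to be strictly positive rather than merely to tend to $0$.
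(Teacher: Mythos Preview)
Your argument is correct, but it proceeds by a genuinely different route than the paper. The paper's proof is $p$-adic: it writes $H_n(\mathbf{s})=\sum_{k=1}^{n-r+1}c_k/k^{s_1}$, takes the largest prime $p\le n-r+1$ (so that $2p>n-r+1$ by Bertrand), and shows that for $s_1$ exceeding a threshold expressed in terms of the $\nu_p(c_k)$ one has $\nu_p(H_n(\mathbf{s}))=\nu_p(c_p)-s_1<0$. Your proof is archimedean: you isolate the $k_1=1$ term $c$, bound the rest by $\binom{n-1}{r}2^{-s_1}$, and trap $H_n(\mathbf{s})$ strictly between $\lfloor c\rfloor$ and $\lfloor c\rfloor+1$. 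Your handling of the edge cases ($n=r$, and $c$ possibly integral) is clean.

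What each approach buys: yours is more elementary---no primes, no valuations---and perfectly adequate for the bare existence statement of the lemma. The paper's $p$-adic bound, however, is what makes the later computational reduction feasible: in Remark~2.4 and the tables the authors need concrete small values of $M$ (typically $\le 5$), and those come from $\nu_p(c_p)+\max_i s_i$. Your bound requires $2^{M+1}>\binom{n-1}{r}/(\lfloor c\rfloor+1-c)$, which depends on how close $c$ happens to lie to the next integer and is in practice much larger; for example, already for $r=2$, $s_2=1$, $n=21$ one gets $M\ge 9$ versus the paper's $M\le 3$. So your proof establishes the lemma but would not plug into the rest of the paper's argument without reworking the finite verification.
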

\begin{proof} If $r=n$, then $H_n(s_1,s_2,\ldots,s_r)$ is trivially not an integer for all $s_1>0$.

\noindent If $r<n$, then we have that
$$H_n(s_1,\ldots,s_r)=\sum_{k=1}^{n-r+1}\frac{c_k}{k^{s_1}}\qquad\mbox{where}\qquad
c_k:=\sum_{k<k_2<\cdots<k_r\le n}\frac{1}{k_2^{s_2}\cdots k_r^{s_r}}>0.$$
Let $p$ be the largest prime in $[2,n-r+1]$. Then
$2p>n-r+1$ (otherwise by Bertrand's postulate there is a prime $q$ such that $p<q<2p\leq n-r+1$). Let
$$M:=\max\left(\nu_p(c_p),\nu_p(c_p)-\underset{k\not=p}{\min_{1\le k\le n-r+1}}(\nu_p(c_k))\right).$$
We will show that $\nu_p(H_n({\bf s}))<0$, which implies
that $H_n({\bf s})$ is not an integer.
Assume that $s_1>M$, then
$$\mbox{i) }
s_1>\nu_p(c_p)\qquad\qquad\mbox{and}\qquad\qquad
\mbox{ii) }  s_1>\nu_p(c_p)-\underset{k\not=p}{\min_{1\le k\le n-r+1}}(\nu_p(c_k)).$$
Now
$$H_n({\bf s})=\frac{c_p}{p^{s_1}}+
\underset{k\not=p}
{\sum_{1\le k\le n-r+1}}\frac{c_k}{k^{s_1}}.$$
By ii), we have that
$$\nu_p\left(\underset{k\not=p}
{\sum_{1\le k\le n-r+1}}\frac{c_k}{k^{s_1}}\right)
\geq \underset{k\not=p}
{\min_{1\le k\le n-r+1}} \nu_p\left(\frac{c_k}{k^{s_1}}\right)
=\underset{k\not=p}
{\min_{1\le k\le n-r+1}} \nu_p\left(c_k\right)
>\nu_p(c_p)-s_1=\nu_p\left(\frac{c_p}{p^{s_1}}\right).$$
and by i),
$$\nu_p\left(\frac{c_p}{p^{s_1}}\right)=\nu_p(c_p)-s_1<0.$$
Therefore
$$\nu_p\left(H_n({\bf s})\right)=
\min\left(
\nu_p\left(
\underset{k\not=p}
{\sum_{1\le k\le n-r+1}}
\frac{c_k}{k^{s_1}}
\right),
\nu_p\left(\frac{c_p}{p^{s_1}}\right)
\right)
=\nu_p\left(\frac{c_p}{p^{s_1}}\right)<0.
$$
\end{proof}

\begin{remark} \label{R1}
If, in addition to the inequalities $2\leq p\leq n-r+1$, the prime $p$ satisfies $n/2<p$, then
$$\nu_p(c_k)\geq \min_{k<k_2<\cdots<k_r\le n}{\nu_p\left(\frac{1}{k_2^{s_2}\cdots k_r^{s_r}}\right)}
=\begin{cases}
-\max_{2\leq i\leq r}{s_i}, &\text{if $1\leq k<p$;} \\
0, &\text{if $p\leq k\leq n-r+1$.}
\end{cases}$$
Therefore
$$\underset{k\not=p}{\min_{1\le k\le n-r+1}}(\nu_p(c_k))\geq -\max_{2\leq i\leq r}{s_i}.$$
This means that
$$M':=\nu_p(c_p)+\max_{2\leq i\leq r}{s_i}\geq M.$$
When the assumption $n/2<p$ is met, we will prefer to use the bound $M'$ to $M$ because the bound $M'$ is computationally easier to determine.
\end{remark}

\begin{proof}[{\bf Proof of Theorem \ref{Hn}}] Since $H_n(s_1)=H^{\star}_n(s_1)$, by Theorem \ref{Sn}, the statement holds for $r=1$.
If $e(\ln(n)+1)\leq r$, then
by Remark \ref{R2} and the fact that $e^r>r^r/r!$, we have
$$H_{n}({\bf s})\leq\frac{(\ln(n)+1)^r}{r!}
<\left(\frac{\ln(n)+1}{r/e}\right)^r\leq 1,
$$
which implies that $H_n({\bf s})$ can not be an integer.

\noindent Assume that $2\leq r<e(\ln(n)+1)$. Then $\exp(r/e-1)<n$ and, since it can be  verified that
$$(r+1)\exp(\sqrt{1.4\,r})\leq \exp(r/e-1) \quad \mbox{for $r\geq 30$,}$$
it follows that $n\in A_r$ and we are done as soon as we use Remark \ref{R3}.

\noindent Remark \ref{R3} can be applied successfully even
when $2\leq r\leq 29$ and $n\geq 9599$ because
$$m_r\leq 331 r\leq 331\cdot 29=9599\leq n.$$

\noindent Hence the cases  remained to  consider are $2\leq r\leq 29$ and $r\leq n< m_r$ (with $n\not\in A_r$).

\noindent For $r=25,26,27,28,29$, by Remark \ref{R2},
$$H_{n}({\bf s})<H_{m_r}({\bf s})\leq
H_{m_{r}}(\{1\}^{r})\leq\frac{(\ln(m_r)+1)^r}{r!}<1$$
where the last inequality can be easily verified numerically.

\noindent For $r=19,20,21,22,23,24$, by Lemma \ref{L2},
$$H_{n}({\bf s})<H_{m_r}({\bf s})\leq
H_{m_{r}}(\{1\}^{r})$$
and the non-integrality of $H_{n}({\bf s})$ is implied by the following evaluations:
\begin{center}\rm
\begin{tabular}l
$H_{m_{24}}(\{1\}^{24})<0.025084028<1$,\\
$H_{m_{23}}(\{1\}^{23})<0.068740285<1$,\\
$H_{m_{22}}(\{1\}^{22})<0.145564965<1$,\\
$H_{m_{21}}(\{1\}^{21})<0.369820580<1$,\\
$H_{m_{20}}(\{1\}^{20})<0.379560254<1$,\\
$H_{m_{19}}(\{1\}^{19})<0.916202538<1$.\\
\end{tabular}
\end{center}
\noindent The strategy to handle the cases where
$2\leq r\leq 18$ is fairly more complicated because
$H_{m_{r}}(\{1\}^{r})>1$.
The analysis is based on the numerical values presented in Tables
\ref{Tab1}, \ref{Tab2}, and \ref{Tab3}.
Here we give a detailed explanation of how the data in such tables are calculated and used for $r=5$. The other cases can be treated in a similar way.  What turns out at the end is that the only exception for $r\geq 2$ is $H_3(1,1)=1$.

\noindent We first determine the {\sl optimal set} of length $5$,
that is, a set of $5$-tuples such that the multiple harmonic sums with $n=m_5=145$ are the largest sums less than $1$ with small weights
(columns $2$ and $3$):
\begin{center}\rm
\begin{tabular}l
$H_{m_{5}}(\{1\}^4,2)<0.502399297<1$,\\
$H_{m_{5}}(1,2,2,1,1)<0.851108767<1$,\\
$H_{m_{5}}(1,4,\{1\}^3)<0.883176754<1$.
\end{tabular}
\end{center}
Then, thanks to Lemma \ref{L2}, the size of the set of multiple harmonic sums less than $1$ (and therefore not integral) can be extended.

\noindent Let $5\le n<m_5$. If $s_2\ge 4$, then
$$
H_n(s_1,s_2,s_3,s_4,s_5)\le H_n(1,4,\{1\}^{3})<
H_{m_5}(1,4,\{1\}^{3})<1.
$$
If $s_2\in\{2,3\}$ and there is $s_j\ge 2$ with $3\le j\le 5$, then
$$
H_n(s_1,s_2,s_3,s_4,s_5)\le H_n(1,2,s_3,s_4,s_5)
\le H_n(1,2,2,1,1)< H_{m_5}(1,2,2,1,1)<1.
$$
If $s_2=1$ and $s_3\ge 3$, then
$$
H_n(s_1,1,s_3,s_4,s_5)\le H_n(1,1,3,1,1)\le H_n(1,2,2,1,1)
< H_{m_5}(1,2,2,1,1)<1.
$$
If $s_2=1$, $s_3=2$, and $s_4\ge 2$ or $s_5\ge 2$, then
\begin{align*}
H_n(s_1,1,2,s_4,s_5)&\le H_n(1,1,2,s_4,s_5)\le H_n(1,1,2,2,1) \\
&\le H_n(1,2,2,1,1)< H_{m_5}(1,2,2,1,1)<1.
\end{align*}
If $s_2=1$, $s_3=1$, and $s_4\ge 3$, then
\begin{align*}
H_n(s_1,1,1,s_4,s_5)\le H_n(\{1\}^3,3,1)\le H_n(1,2,2,1,1)
< H_{m_5}(1,2,2,1,1)<1.
\end{align*}
If $s_2=1$, $s_3=1$, $s_4=2$, and $s_5\ge 2$, then
\begin{align*}
H_n(s_1,1,1,2,s_5)&\le H_n(\{1\}^3,2,s_5)
\le H_n(\{1\}^3,2,2) \\
&\le H_n(1,2,2,\{1\}^{2})< H_{m_5}(1,2,2,\{1\}^{2})<1.
\end{align*}
If $s_2=s_3=s_4=1$ and $s_5\ge 2$, then
\begin{align*}
H_n(s_1,\{1\}^3,s_5)\le H_n(\{1\}^4,s_5)\le H_n(\{1\}^4,2)
< H_{m_5}(\{1\}^4,2)<1.
\end{align*}
The set of $5$-tuples of positive integers which are excluded by the analysis above is what we call the {\sl exclusion set} (column $4$):
$$(s_1,3,\{1\}^3),\;\;
(s_1,2,\{1\}^3),\;\;
(s_1,1,2,1,1),\;\;
(s_1,1,1,2,1),\;\;
(s_1,\{1\}^4)$$
with $s_1\geq 1$.
By using Lemma \ref{L3} and Remark \ref{R1}, we are able to give an upper bound for $s_1$ (column $5$) and therefore to reduce the size of the exclusion set to a finite number.
Finally, it suffices to compute the rational number $H_n({\bf s})$ for
$5 \leq n <m_5$ and for ${\bf s}$ in the exclusion set with the upper bound for $s_1$ established in the last column. For $r=5$, none of them is an integer.
\end{proof}

\noindent\textbf{Acknowledgement.} Kh.~Hessami Pilehrood and T.~Hessami Pilehrood gratefully  acknowledge support from the Fields Institute Research Immersion Fellowships.

\begin{table}
\setlength{\extrarowheight}{1pt}
\caption{Optimal sets, exclusion sets and upper bounds for
$2\leq r\leq 8$} \label{Tab1}
\begin{center}\rm
\begin{tabular}{|c|c|c|c|c|}
\hline
\text{Length}      & \text{Optimal set of} &\text{Upper bound for}
&\text{Exclusion set}& \text{Upper bound}   \\
$r$& $(s_1,s_2,\dots,s_r)$ &$H_{m_r}(s_1,s_2,\dots,s_r)$ &\text{with} $s_1\ge 1$
&\text{for} $s_1$\\
\hline
2&$(1,2)$ &$<0.994099321$&$(s_1,1)$&$\leq 3$\\
\hline
3&$(1,1,2)$&$<0.706260681$  &$(s_1,2,1)$&$\leq 3$\\
 & $(1,3,1)$&$<0.589038111$ &$(s_1,1,1)$&$\leq 3$\\
\hline
 &$(1,1,1,2)$&$<0.684621751$  &$(s_1,3,1,1)$&$\leq 4$\\
4&$(1,2,2,1)$&$<0.537884954$  &$(s_1,2,1,1)$&$\leq 3$\\
 &$(1,4,1,1)$& $<0.627965284$  &$(s_1,1,2,1)$&$\leq 3$\\
 &                 &   &$(s_1,1,1,1)$&$\leq 3$\\
 \hline
 & &  & $(s_1,3,\{1\}^3)$&$\leq 4$\\
 &$(\{1\}^4,2)$ &$<0.502399297$  &$(s_1,2,\{1\}^3)$&$\leq 3$\\
 5&$(1,2,2,1,1)$&$<0.851108767$  &$(s_1,1,2,1,1)$&$\leq 3$\\
 &$(1,4,\{1\}^3)$&  $<0.883176754$  &$(s_1,1,1,2,1)$&$\leq 3$\\
 &   &   &$(s_1,\{1\}^4)$&$\leq 3$\\
 \hline
 & & &$(s_1,4,\{1\}^4)$&$\leq 5$\\
 &$(\{1\}^4,2,1)$  &$<0.861061229$ &$(s_1,3,\{1\}^4)$&$\leq 4$\\
&$(1,1,3,\{1\}^3)$&$<0.675761881$  &$(s_1,2,2,\{1\}^3)$&$\leq 3$\\
6& $(1,2,1,2,1,1)$& $<0.571129690$ &$(s_1,2,\{1\}^4)$&$\leq 3$\\
 &$(1,3,2,\{1\}^3)$&$<0.506341468$  &$(s_1,1,2,\{1\}^3)$&$\leq 3$\\
 & $(1,5,\{1\}^4)$&$<0.561379826$ &$(s_1,1,1,2,1,1)$&$\leq 3$\\
 &  &  &$ (s_1,\{1\}^5)$&$\leq 2$\\
 \hline
 & & &$(s_1,4,\{1\}^5)$&$\leq 5$\\
 &$(\{1\}^4,2,1,1)$&$<0.998935309$ &$(s_1,3,\{1\}^5)$&$\leq 4$\\
 &$(1,1,3,\{1\}^4)$&$<0.712465109$  &$(s_1,2,2,\{1\}^4)$&$\leq 3$\\
 7& $(1,2,1,2,\{1\}^3)$& $<0.637523826$ &$(s_1,2,\{1\}^5)$&$\leq 3$\\
 &$(1,3,2,\{1\}^4)$& $<0.531072055$ &$(s_1,1,2,\{1\}^4)$&$\leq 3$\\
 & $(1,5,\{1\}^5)$ &$<0.553285019$  &$(s_1,1,1,2,\{1\}^3)$&$\leq 4$\\
 &  &  &$ (s_1,\{1\}^6)$&$\leq 2$\\
 \hline
 & & &$(s_1,4,\{1\}^6)$&$\leq 5$\\
 &$(\{1\}^5,2,1,1)$&$<0.756114380$ &$(s_1,3,\{1\}^6)$&$\leq 4$\\
 &$(1,1,3,\{1\}^5)$&$<0.985465806$ &$(s_1,2,2,\{1\}^5)$&$\leq 5$\\
 8& $(1,2,1,2,\{1\}^4)$&$<0.896426675$  &$(s_1,2,\{1\}^6)$&$\leq 3$\\
 &$(1,3,2,\{1\}^5)$&$<0.733626794$  &$(s_1,1,2,\{1\}^5)$&$\leq 4$\\
 & $(1,5,\{1\}^6)$ &$<0.750552945$ &$(s_1,1,1,2,\{1\}^4)$&$\leq 4$\\
 &   &  &$(s_1,\{1\}^3,2,\{1\}^3)$&$\leq 2$\\
 &  &  &$ (s_1,\{1\}^6)$&$\leq 2$\\
 \hline
\end{tabular}
\end{center}
\end{table}
\begin{table}
\setlength{\extrarowheight}{1pt}
\caption{Optimal sets, exclusion sets and upper bounds for
$9\leq r\leq 11$} \label{Tab2}
\begin{center}\rm
\begin{tabular}{|c|c|c|c|c|}
\hline
\text{Length}      & \text{Optimal set of} &\text{Upper bound for}
&\text{Exclusion set}& \text{Upper bound}   \\
$r$& $(s_1,s_2,\dots,s_r)$ &$H_{m_r}(s_1,s_2,\dots,s_r)$ &\text{with} $s_1\ge 1$
&\text{for} $s_1$\\
\hline
 &  &  &$(s_1,5,\{1\}^7)$&$\leq 7$\\
 &  &  &$(s_1,4,\{1\}^7)$&$\leq 5$\\
 &  &  &$(s_1,3,2,\{1\}^6)$&$\leq 6$\\
 &$(\{1\}^6,2,1,1)$&$<0.925976971$&$(s_1,3,\{1\}^7)$&$\leq 4$\\
 &$(1,2,\{1\}^3,2,\{1\}^3)$&$<0.570757133$   &$(s_1,2,2,\{1\}^6)$&$\leq 3$\\
 &$(1,1,2,1,2,\{1\}^4)$&$<0.591649570$  & $(s_1,2,1,2,\{1\}^5)$&$\leq 4$\\
9&$(1,1,1,3,\{1\}^5)$&$<0.707900100$   &$(s_1,2,1,1,2,\{1\}^4)$&$\leq 3$\\
  &$(1,3,1,2,\{1\}^5)$&$<0.884650814$    &$(s_1,2,\{1\}^7)$&$\leq 4$\\
  &$(1,2,3,\{1\}^6)$&$<0.979701671$       &$(s_1,1,3,\{1\}^6)$&$\leq 4$\\
  &$(1,4,2,\{1\}^6)$&$<0.814983421$      &$(s_1,1,2,2,\{1\}^5)$&$\leq 3$\\
  &$(1,6,\{1\}^7)$&    $<0.897735179$    &$(s_1,1,2,\{1\}^6)$&$\leq 3$\\
  & & &$(s_1,1,1,2,\{1\}^5)$&$\leq 3$  \\
  &  & &$(s_1,\{1\}^3,2,\{1\}^4)$&$\leq 3$  \\
  &  &  &$(s_1,\{1\}^4,2,\{1\}^3)$&$\leq 3$\\
  &  &  &$(s_1,\{1\}^8)$&$\leq 2$\\
  \hline
  &  &  &$(s_1,5,\{1\}^8)$&$\leq 6$\\
 &  &  &$(s_1,4,\{1\}^8)$&$\leq 6$\\
 &  &  &$(s_1,3,2,\{1\}^7)$&$\leq 4$\\
&$(\{1\}^6,2,\{1\}^3)$&$<0.938457721$&$(s_1,3,\{1\}^8)$&$\leq 4$\\
&$(1,2,\{1\}^3,2,\{1\}^4)$&$<0.561773422$ &$(s_1,2,2,\{1\}^7)$&$\leq 4$\\
&$(1,1,2,1,2,\{1\}^5)$&$<0.558322794$  & $(s_1,2,1,2,\{1\}^6)$&$\leq 4$\\
10&$(1,1,1,3,\{1\}^6)$&$<0.644468502$   &$(s_1,2,1,1,2,\{1\}^5)$&$\leq 3$\\
  &$(1,3,1,2,\{1\}^6)$&$<0.787882830$    &$(s_1,2,\{1\}^8)$&$\leq 3$\\
  &$(1,2,3,\{1\}^7)$&$<0.847077826$       &$(s_1,1,3,\{1\}^7)$&$\leq 4$\\
  &$(1,4,2,\{1\}^7)$&$<0.697660582$      &$(s_1,1,2,2,\{1\}^6)$&$\leq 3$\\
  &$(1,6,\{1\}^8)$&$<0.739316195$ &$(s_1,1,2,\{1\}^7)$&$\leq 4$\\
  & & &$(s_1,1,1,2,\{1\}^6)$&$\leq 3$  \\
  &  & &$(s_1,\{1\}^3,2,\{1\}^5)$&$\leq 4$  \\
  &  &  &$(s_1,\{1\}^4,2,\{1\}^4)$&$\leq 3$\\
  &  &  &$(s_1,\{1\}^9)$&$\leq 3$\\
  \hline
  &  &  &$(s_1,5,\{1\}^9)$&$\leq 8$\\
    & & &$(s_1,4,\{1\}^9)$&$\leq 5$\\
  &  &  &$(s_1,3,2,\{1\}^8)$&$\leq 6$\\
  &$(\{1\}^6,2,\{1\}^4)$&$<0.814836649$&$(s_1,3,1,\{1\}^8)$&$\leq 5$\\
  &$(1,2,1,1,2,\{1\}^6)$&$<0.841877884$  &$(s_1,2,2,\{1\}^8)$&$\leq 4$\\
  &$(1,1,2,2,\{1\}^7)$&$<0.836565448$  &$(s_1,2,1,2,\{1\}^7)$&$\leq 3$\\
11&$(1,3,1,2,\{1\}^7)$&$<0.621949675$  &$(s_1,2,1,1,\{1\}^7)$&$\leq 3$\\
 &$(1,2,3,\{1\}^8)$&$<0.653475111$   &$(s_1,1,3,\{1\}^8)$&$\leq 5$\\
 &$(1,4,2,\{1\}^8)$&$<0.533753072$   &$(s_1,1,2,\{1\}^8)$&$\leq 3$\\
 &$(1,6,\{1\}^9)$&$<0.548020075$&$(s_1,1,1,2,\{1\}^7)$&$\leq 3$\\
 &  &  &$(s_1,\{1\}^3,2,\{1\}^6)$&$\leq 3$\\
 &  &  &$(s_1,\{1\}^4,2,\{1\}^5)$&$\leq 3$ \\
 &  &  &$(s_1,\{1\}^{10})$&$\leq 3$\\
 \hline
\end{tabular}
\end{center}
\end{table}
\begin{table}
\setlength{\extrarowheight}{1pt}
\caption{Optimal sets, exclusion sets and upper bounds for
$12\leq r\leq 18$} \label{Tab3}
\begin{center}\rm
\begin{tabular}{|c|c|c|c|c|}
\hline
\text{Length}      & \text{Optimal set of} &\text{Upper bound for}
&\text{Exclusion set}& \text{Upper bound}   \\
$r$& $(s_1,s_2,\dots,s_r)$ &$H_{m_r}(s_1,s_2,\dots,s_r)$ &\text{with} $s_1\ge 1$
&\text{for} $s_1$\\
\hline
   & & &$(s_1,4,\{1\}^{10})$&$\leq 5$\\
  &  &  &$(s_1,3,\{1\}^{10})$&$\leq 4$\\
  &$(\{1\}^6,2,\{1\}^5)$&$<0.622525355$&$(s_1,2,2,\{1\}^9)$&$\leq 3$\\
 &$(1,2,1,1,2,\{1\}^7)$&$<0.611346842$&$(s_1,2,1,2,\{1\}^8)$&$\leq 4$\\
 12 &$(1,1,2,2,\{1\}^8)$&$<0.595133583$  &$(s_1,2,\{1\}^{10})$&$\leq 3$\\
&$(1,3,2,\{1\}^9)$&$<0.816806820$  &$(s_1,1,3,\{1\}^9)$&$\leq 4$\\
 &$(1,5,\{1\}^{10})$& $<0.779899998$  &$(s_1,1,2,\{1\}^{9})$&$\leq 4$\\
 & &    &$(s_1,1,1,2,\{1\}^8)$&$\leq 3$\\
 &  &  &$(s_1,\{1\}^3,2,\{1\}^7)$&$\leq 3$\\
 &  &  &$(s_1,\{1\}^4,2,\{1\}^6)$&$\leq 3$ \\
 &  &  &$(s_1,\{1\}^{11})$&$\leq 3$\\
 \hline
 & & &$(s_1,4,\{1\}^{11})$&$\leq 5$\\
 &$(\{1\}^5,2,\{1\}^7)$&$<0.684109082$   &$(s_1,3,\{1\}^{11})$&$\leq 4$\\
 &$(1,2,1,2,\{1\}^9)$&$<0.678679596$   &$(s_1,2,2,\{1\}^{10})$&$\leq 4$\\
13&$(1,1,3,\{1\}^{10})$&$<0.694738378$  &$(s_1,2,\{1\}^{11})$&$\leq 3$\\
 &$(1,3,2,\{1\}^{10})$&$<0.514442958$   &$(s_1,1,2,\{1\}^{10})$&$\leq 3$\\
 &$(1,5,\{1\}^{11})$&$<0.481478855$      &$(s_1,1,1,2,\{1\}^9)$&$\leq 3$\\
 &   &   &$(s_1,\{1\}^3,2,\{1\}^8)$&$\leq 3$\\
 &   &   &$(s_1,\{1\}^{12})$&$\leq 2$\\
 \hline
 & &  &$(s_1,3,\{1\}^{12})$&$\leq 4$\\
 &$(\{1\}^4,2,\{1\}^9)$&$<0.887408969$&$(s_1,2,\{1\}^{12})$&$\leq 3$\\
14& $(1,2,2,\{1\}^{11})$&$<0.951784321$  &$(s_1,1,2,\{1\}^{11})$&$\leq 3$\\
 &$(1,4,\{1\}^{12})$      & $<0.818231535$ &$(s_1,1,1,2,\{1\}^{10})$&$\leq 3$\\
 &  &  &$(s_1,\{1\}^{13})$&$\leq 3$\\
 \hline
 &$(\{1\}^3,2,\{1\}^{11})$& $<0.799176532$ &$(s_1,3,\{1\}^{13})$&$\leq 4$\\
15&$(1,2,2,\{1\}^{12})$&$<0.521690836$   &$(s_1,2,\{1\}^{13})$&$\leq 3$\\
&$(1,4,\{1\}^{13})$& $<0.443394022$   &$ (s_1,1,2,\{1\}^{12})$&$\leq 3$\\
  & & &$(s_1,\{1\}^{14})$&$\leq 2$\\
  \hline
16&$(1,1,2,\{1\}^{13})$&$<0.683053316$& $(s_1,2,\{1\}^{14})$&$\leq 3$\\
   &$(1,3,\{1\}^{14})$&$<0.509572431$   &$(s_1,\{1\}^{15})$&$\leq 3$\\
 \hline
 17&$(1,1,2,\{1\}^{14})$&$<0.732770497$  & $(s_1,2,\{1\}^{15})$&$\leq 3$\\
   &$(1,3,\{1\}^{15})$&$<0.546742691$   &$(s_1,\{1\}^{16})$&$\leq 3$\\
 \hline
 18&$(1,2,\{1\}^{16})$&$<0.722868056$  & $(s_1,\{1\}^{17})$&$\leq 3$\\
 \hline
\end{tabular}
\end{center}
\end{table}


\begin{thebibliography}{99}

\bibitem{CT}
Y. Chen and M. Tang,
\emph{On the elementary symmetric functions of $1, 1/2, \dots, 1/n$},
Amer. Math. Monthly \textbf{119} (2012), 862-867.

\bibitem{D}
P. Dusart,
\emph{In\'egalit\'es explicites pour $\psi(x)$, $\theta(x)$, $\pi(x)$ et les nombres premiers},
C. R. Math. Acad. Sci. Soc. R. Can. \textbf{21} (1999), 53-59.

\bibitem{EN}
P. Erd\H{o}s and I. Niven,
\emph{Some properties of partial sums of the harmonic series},
Bull. Amer. Math. Soc. \textbf{52} (1946), 248-251.

\bibitem{HW1}
S. Hong and C. Wang,
\emph{The elementary symmetric functions of reciprocals of elements of arithmetic progressions},
Acta Math. Hung., \textbf{144} (2014), 196-211.

\bibitem{HW2}
S. Hong and C. Wang,
\emph{On the integrality of the elementary symmetric functions of $1, 1/3,\dots,1/(2n-1)$},
Math. Slovaca, \textbf{65} (2015), 957-962.

\bibitem{IR}
K. Ireland and M. Rosen,
\emph{A Classical Introduction to Modern Number Theory},
Springer-Verlag, 1990.

\bibitem{LHQW}
Y. Luo, S. Hong, G. Qian, C. Wang
\emph{The elementary symmetric functions of a reciprocal polynomial sequence},
C. R. Math. Acad. Sci. Paris, \textbf{352} (2014), 269-272.

\bibitem{N} M. B. Nathanson,
\emph{Elementary Methods in Number Theory}, Springer-Verlag, 2000.

\bibitem{PS}
G. P\'olya and G. Szeg\H{o},
\emph{Problems and theorems in analysis II}, Springer-Verlag, 1976.

\bibitem{T}
L. Theisinger,
\emph{Bemerkung \"uber die harmonische Reihe},
Monatsh. f. Math.
\textbf{26} (1915), 132-134.

\end{thebibliography}
\end{document}